\newtheorem{theorem}{Theorem}[section]
\newtheorem{lemma}[theorem]{Lemma}
\newtheorem{notation}[theorem]{Notation}
\newtheorem{remark}[theorem]{Remark}
\title{Numbers of points of hypersurfaces without lines over finite fields
\footnote{Submitted to to the Fq11 proceedings.}}
\author{
Masaaki Homma
\thanks{Partially supported by Grant-in-Aid
for Scientific Research (24540056), JSPS.}
\\
Department of Mathematics and Physics\\
Kanagawa University\\
Hiratsuka 259-1293, Japan\\
homma@kanagawa-u.ac.jp
}
\date{}
\begin{document}
\maketitle
\begin{abstract}
We give an upper bound for the number of points of a hypersurface
 over a finite field that has no lines on,
in terms of the dimension,
the degree, and the number of the elements of the finite field.
\\
{\em Key Words}: Hypersurface, Finite field, Number of points
\\
{\em MSC}: 14J70, 14G15, 14G05
\end{abstract}
\section{Introduction}
A few years ago, we proved the Sziklai bound for plane curves
in \cite{hom-kim2009, hom-kim2010a, hom-kim2010b}.
Let $C$ be a plane curve of degree $d$ over a finite field $\mathbb{F}_q$.
If $C$ has no $\mathbb{F}_q$-line as a component,
then the number of $\mathbb{F}_q$-points $N_q(C)$ of $C$ is bounded by
\begin{equation}\label{sziklai}
N_q(C) \leq (d-1)q +1
\end{equation}
except for the case $d=q=4$ and $C$ is projectively isomorphic
over $\mathbb{F}_4$ to
\[
K: (X+ Y+Z)^4 +(XY +YZ + ZX)^2
    + XYZ(X+Y+Z) =0.
\]
This bound is not bad. Actually,
for
$d = 2, \, \sqrt{q} +1 \, \text{(if $q$ is square)}, \,
q-1, \, q, \, q+1, \, q+2,$
there are nonsingular curves of degree $d$ over $\mathbb{F}_q$
that attain the upper bound.

We extended this bound for curves in higher dimensional projective spaces
\cite{hom2012}.
In this paper, we attempt another generalization of this bound.

We use the notation $\theta_q(s) = \frac{q^{s+1}-1}{q-1}$
for any $s \in \mathbb{Z}.$
If $s >0$, $\theta_q(s)$ is exactly the number of $\mathbb{F}_q$-points
of $\mathbb{P}^s$.
For $s \leq 0$, $\theta_q(0)=1$, $\theta_q(-1)= 0$,
$\theta_q(-2)= -\frac{1}{q}$, etc.
The identity
$\theta_q(s) = q^s + \theta_q(s-1)$ holds for any $s \in \mathbb{Z}$.

\begin{theorem}\label{maintheorem}
Suppose $ n \geq 2$.
Let $X$ be a hypersurface in $\mathbb{P}^n$
defined over $\mathbb{F}_q$ of degree $d$.
If $X$ does not contain any $\mathbb{F}_q$-lines,
then
\begin{equation}\label{maininequality}
N_q(X) \leq (d-1)(q^{n-1} +1)+(d-2) (\theta_q(n-3) -1)
\end{equation}
except for the case $n=2$, $d=q=4$ and
the curve is projectively isomorphic over $\mathbb{F}_4$
to $K$ above.
\end{theorem}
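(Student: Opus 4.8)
The plan is to argue by induction on $n$. The base case $n=2$ is precisely the Sziklai bound \eqref{sziklai} for plane curves, together with its single exceptional curve $K$; this is the content of \cite{hom-kim2009,hom-kim2010a,hom-kim2010b}, and one checks that \eqref{maininequality} specializes to \eqref{sziklai} when $n=2$ since $\theta_q(-1)-1=-1$. Writing $b(m)=(d-1)(q^{m-1}+1)+(d-2)(\theta_q(m-3)-1)$ for the right-hand side of \eqref{maininequality} in dimension $m$, the goal is to show $N_q(X)\le b(n)$ assuming $N_q(Y)\le b(m)$ for every line-free hypersurface $Y$ of degree $d$ in $\mathbb{P}^m$ with $2\le m<n$.

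The central tool will be a pencil of hyperplanes. Fix a linear subspace $\Lambda\cong\mathbb{P}^{n-2}$ and let $H_0,\dots,H_q$ be the $q+1$ hyperplanes of $\mathbb{P}^n$ containing $\Lambda$. Since each point outside $\Lambda$ lies on exactly one $H_i$ while each point of $\Lambda$ lies on all of them, counting $\mathbb{F}_q$-points of $X$ gives the identity
\begin{equation*}
N_q(X)=\sum_{i=0}^{q}N_q(X\cap H_i)-q\,N_q(X\cap\Lambda).
\end{equation*}
No $H_i$ can lie in $X$ (a hyperplane carries $\mathbb{F}_q$-lines once $n\ge 2$), so each $X\cap H_i$ is a degree-$d$ hypersurface of $H_i\cong\mathbb{P}^{n-1}$, and it is line-free, since any $\mathbb{F}_q$-line on it would be an $\mathbb{F}_q$-line on $X$. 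Hence the induction hypothesis bounds every section by $b(n-1)$. The difficulty is the negative term: to convert the identity into an upper bound one needs a sufficiently large lower bound for $N_q(X\cap\Lambda)$, so $\Lambda$ must be chosen with care.

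First I would dispose of the large-degree range by averaging. Double-counting incident pairs $(P,H)$ with $P\in X(\mathbb{F}_q)$ and $P\in H$ gives $N_q(X)\,\theta_q(n-1)=\sum_H N_q(X\cap H)\le \theta_q(n)\,b(n-1)$, and a direct computation shows $\tfrac{\theta_q(n)}{\theta_q(n-1)}b(n-1)\le b(n)$ once $d\ge q+2$. For the remaining range $d\le q+1$ I would switch to an affine sweep: choose $P_\infty\in X(\mathbb{F}_q)$, place it on a hyperplane $H_0$, and write $N_q(X)=N_q(X\cap H_0)+A$, where $A$ counts the $\mathbb{F}_q$-points of $X$ in $\mathbb{A}^n=\mathbb{P}^n\setminus H_0$. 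The $q^{n-1}$ lines through $P_\infty$ not contained in $H_0$ partition $\mathbb{A}^n$, and each meets $X$ in at most $d$ points one of which is $P_\infty$, so carries at most $d-1$ affine points; thus $A\le(d-1)q^{n-1}$. When $P_\infty$ is a singular $\mathbb{F}_q$-point this improves to $A\le(d-2)q^{n-1}$, and combined with $N_q(X\cap H_0)\le b(n-1)$ one verifies $b(n-1)+(d-2)q^{n-1}\le b(n)$ for all $d\le q+2$, closing the singular case.

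The main obstacle is the sharp case of a smooth hypersurface of small degree $d\le q+1$. Here the crude count $A\le(d-1)q^{n-1}$ already exhausts the leading term of $b(n)$, while the section contributes a further $b(n-1)\sim(d-1)q^{n-2}$; counting only the tangent lines through a smooth $P_\infty$ (those lying in $T_{P_\infty}X$) removes just $q^{n-2}$, short of the required $(d-1)q^{n-2}-(d-2)q^{n-3}$. I expect the resolution to exploit the line-free hypothesis globally rather than line-by-line: for instance, taking $H_0$ tangent to $X$ at $P_\infty$ so that the section $X\cap H_0$ acquires a singular point, feeding the one-dimension-lower singular estimate back into the induction, while simultaneously controlling how many sweeping lines attain the full $d-1$ affine points. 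Finally I would check that the exceptional curve $K$ never forces an exception for $n\ge 3$: even if a hyperplane section is projectively $K$ (possible only when $d=q=4$), its single extra point is absorbed by the wide gap between $(d-1)q+1$ and the far larger value $b(n)$.
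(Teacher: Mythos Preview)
Your proposal has a genuine gap, and you flag it yourself: the smooth case with $d\le q+1$ is not proved. You write ``I expect the resolution to exploit the line-free hypothesis globally\ldots'' but supply no mechanism. This is not a peripheral case; it is the heart of the theorem (for instance the elliptic quadric with $d=2$, $n=3$, which is sharp). Your affine sweep through a smooth point gives $N_q(X)\le b(n-1)+(d-1)q^{n-1}$, and as you compute, this exceeds $b(n)$ by roughly $(d-2)q^{n-2}$; improving by tangent lines recovers only $q^{n-2}$. Nothing in the sketch closes this deficit.

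Your treatment of the exceptional curve $K$ is also incorrect. There is no ``wide gap'' to absorb it: when $n=3$, $d=q=4$, a plane section equal to $K$ has $14$ points against the inductive bound $b(2)=13$, and plugging $\delta=14$ into any of your counting identities (or into the combinatorial bound the paper uses) yields only $N_4(S)\le 55$, not $51$. The paper devotes an entire section to this single case, analysing the possible tangent-plane sections of $S$ in detail.

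The paper's induction step avoids your smooth/singular and large-/small-degree case split entirely. It invokes a purely combinatorial fact about point sets: if $S\subset\mathbb{P}^n(\mathbb{F}_q)$ meets every $\mathbb{F}_q$-hyperplane in at most $\delta$ points, then ${}^{\#}S\le(\delta-1)q+1+\lfloor(\delta-1)/\theta_q(n-2)\rfloor$. Applying this with $\delta=b(n-1)$ and computing the floor gives exactly $b(n)$, so the induction step is a one-line calculation once every hyperplane section obeys the $(n-1)$-dimensional bound. The only place this fails is $n=3$, $d=q=4$, which is then handled ad hoc. Your pencil and affine-sweep machinery is replaced by a single averaging-type inequality over \emph{all} hyperplanes, whose floor term happens to match the lower-order part of $b(n)$ precisely.
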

When $n=2$, this theorem agrees with \cite[Theorem 3.1]{hom-kim2010b},
and several plane curves achieve the upper bound in
(\ref{maininequality}) as mentioned above.

When $n=3$, an elliptic quadric surface has no $\mathbb{F}_q$-line
and attains the upper bound in (\ref{maininequality}).
As for details, see \cite[Chapter 5]{hir1979}
and/or
\cite[Chapter 15]{hir1985}.

\begin{notation}
For a variety $X$ over $\mathbb{F}_q$,
$X(\mathbb{F}_q)$ denotes the set of $\mathbb{F}_q$-points of $X$.
In particular, $\mathbb{P}^n(\mathbb{F}_q)$
is the $n$-dimensional finite projective space over $\mathbb{F}_q$.
The number of $X(\mathbb{F}_q)$ is denoted by $N_q(X)$.
$\Check{\mathbb{P}}^n(\mathbb{F}_q)$
denotes the set of hyperplanes of $\mathbb{P}^n$
defined over $\mathbb{F}_q$.
If $Y$ is a variety defined over a finite extension of $\mathbb{F}_q$,
the image of $Y$ under the $q$-Frobenius map is denoted by
$Y^{(q)}$.

The number of elements of a finite set $S$ is denoted by ${}^{\#}S$.
\end{notation}

\section{The first step}
We prove Theorem~\ref{maintheorem} by induction on $n$.
Let $X$ be a hypersurface
over $\mathbb{F}_q$ of degree $d$ in $\mathbb{P}^n$.
Let $X = \bigcup_{i} X_i$ be the decomposition into
$\mathbb{F}_q$-irreducible components, and $\deg X_i = d_i$.
If
\[
N_q(X_i) \leq (d_i-1)(q^{n-1} +1)+(d_i-2) (\theta_q(n-3) -1)
\]
holds for each $X_i$,
we have upper bound (\ref{maininequality}) for $X$,
because $d = \sum_i d_i$ and $N_q(X) \leq \sum_{i} N_q(X_i)$.
So we assume, a priori, that
{\em $X$ is irreducible over $\mathbb{F}_q$}.

Under the above circumstance,
the following lemma holds.
\begin{lemma}\label{inductionlemma}
If, for any $H \in \Check{\mathbb{P}}^n(\mathbb{F}_q)$,
\[
N_q(X \cap H) \leq (d-1)(q^{n-2}+1) + (d-2)(\theta_q(n-4)-1)
\]
holds, then 
\[
N_q(X) \leq (d-1)(q^{n-1}+1) + (d-2)(\theta_q(n-3)-1).
\]
\end{lemma}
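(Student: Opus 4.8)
The plan is to fix one rational point of $X$ and count incidences between the rational points of $X$ and the hyperplanes through that point. If $N_q(X)=0$ there is nothing to prove, since for $n\ge 3$ the right-hand side of the desired inequality is positive; so I may assume $X(\mathbb{F}_q)\neq\emptyset$ and fix a point $P_0\in X(\mathbb{F}_q)$. Write $B=(d-1)(q^{n-2}+1)+(d-2)(\theta_q(n-4)-1)$ for the hypothesised bound on hyperplane sections, $T=(d-1)(q^{n-1}+1)+(d-2)(\theta_q(n-3)-1)$ for the target, and $N=N_q(X)$.

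The key step is an incidence identity obtained by summing $N_q(X\cap H)$ over the $\theta_q(n-1)$ hyperplanes $H\in\Check{\mathbb{P}}^n(\mathbb{F}_q)$ that contain $P_0$. The point $P_0$ lies on every such $H$, while each of the remaining $N-1$ rational points of $X$ lies on exactly $\theta_q(n-2)$ of them (the hyperplanes through two fixed points of $\mathbb{P}^n$ form a $\Check{\mathbb{P}}^{n-2}(\mathbb{F}_q)$). Counting the pairs $(Q,H)$ with $Q\in X(\mathbb{F}_q)$, $P_0\in H$ and $Q\in H$ in two ways therefore gives
\[
\sum_{H\ni P_0} N_q(X\cap H)=\theta_q(n-1)+(N-1)\,\theta_q(n-2).
\]
Applying the hypothesis $N_q(X\cap H)\le B$ to each of the $\theta_q(n-1)$ terms on the left yields $\theta_q(n-1)+(N-1)\theta_q(n-2)\le \theta_q(n-1)B$, that is,
\[
(N-1)\,\theta_q(n-2)\le \theta_q(n-1)\,(B-1).
\]

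It then remains to extract the stated bound from this inequality. Using the recursion $\theta_q(s)=q\,\theta_q(s-1)+1$ (equivalent to the one recorded in the introduction) together with the definition of $\theta_q$, the two quantities simplify to $B-1=(d-1)q^{n-2}+(d-2)\theta_q(n-4)$ and $T=(d-1)q^{n-1}+(d-2)\theta_q(n-3)+1$. I would then compare $\theta_q(n-1)(B-1)$ with $\theta_q(n-2)\,T$ directly: a short computation, again using the recursion for $\theta_q$, gives $\theta_q(n-2)\,T-\theta_q(n-1)(B-1)=\theta_q(n-3)+(d-2)q^{n-3}$, which is strictly positive for $n\ge 3$. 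Combining this with the displayed inequality gives $(N-1)\theta_q(n-2)<\theta_q(n-2)\,T$, hence $N-1<T$; since $N$ and $T$ are integers for $n\ge 3$, this forces $N\le T$, as claimed.

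The main obstacle is precisely this final arithmetic margin. Crude averaging of $N_q(X\cap H)$ over hyperplanes \emph{not} required to pass through a point of $X$ loses a factor of roughly $q$ and is far too weak; it is essential that $P_0$ be chosen \emph{on} $X$, so that the term $\theta_q(n-1)$ (recording that $P_0$ is counted by every section) appears and absorbs most of the slack. Even so, the estimate only places $N$ within a fractional distance below $T+1$, so the argument closes only after invoking the integrality of $N_q(X)$ and verifying that the surplus $\theta_q(n-3)+(d-2)q^{n-3}$ is genuinely positive. The case $n=2$, where this surplus degenerates, is exactly the plane-curve theorem quoted in the introduction and is treated separately.
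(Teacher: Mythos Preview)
Your proof is correct and follows essentially the same route as the paper. The paper packages the incidence count you perform as a quotable combinatorial bound (Lemma~\ref{lemmainfinitegeometry}, cited from \cite{hom2012}): for any set $S\subset\mathbb{P}^n(\mathbb{F}_q)$ with ${}^{\#}(S\cap H)\le\delta$ for every $\mathbb{F}_q$-hyperplane $H$, one has ${}^{\#}S\le(\delta-1)q+1+\lfloor(\delta-1)/\theta_q(n-2)\rfloor$. Your argument (fix $P_0\in X(\mathbb{F}_q)$, sum over hyperplanes through $P_0$, use integrality) is exactly the proof of that lemma carried out inline, and your surplus computation $\theta_q(n-2)T-\theta_q(n-1)(B-1)=\theta_q(n-3)+(d-2)q^{n-3}>0$ is equivalent to the paper's verification that $\lfloor(\delta-1)/\theta_q(n-2)\rfloor=d-2$ together with the final arithmetic identity $(\delta-1)q+1+(d-2)=T$.
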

To show the above, the following lemma is needed.
\begin{lemma}\label{lemmainfinitegeometry}
Let $S$ be a subset of $\mathbb{P}^n(\mathbb{F}_q)$.
If ${}^{\#}(S\cap H) \leq \delta$ for
any $H \in  \Check{\mathbb{P}}^n(\mathbb{F}_q)$,
then
\[
{}^{\#}S \leq (\delta -1)q +1
 + \lfloor \frac{\delta -1}{\theta_q(n-2)} \rfloor,
\]
where
$\lfloor \frac{\delta -1}{\theta_q(n-2)} \rfloor$
is the integer part of $\frac{\delta -1}{\theta_q(n-2)}$.
\end{lemma}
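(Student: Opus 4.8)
The plan is to prove the bound by a weighted incidence count carried out in the ``star'' of a point of $S$. We may assume $S \neq \emptyset$, so fix a point $P \in S$. The lines of $\mathbb{P}^n$ through $P$ are parametrized by the points of the $\mathbb{P}^{n-1}$ of directions at $P$ (the projectivized quotient space), there being $\theta_q(n-1)$ of them, and they partition $\mathbb{P}^n(\mathbb{F}_q) \setminus \{P\}$. Under this identification the hyperplanes $H \in \Check{\mathbb{P}}^n(\mathbb{F}_q)$ passing through $P$ correspond exactly to the hyperplanes of this $\mathbb{P}^{n-1}$, and a line $\ell$ through $P$ lies in such an $H$ if and only if the point representing $\ell$ lies on the corresponding hyperplane.

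First I would introduce, for each line $\ell$ through $P$, the weight $a(\ell) = {}^{\#}(S \cap \ell) - 1 \geq 0$, the number of points of $S$ on $\ell$ other than $P$, so that ${}^{\#}S = 1 + \sum_{\ell \ni P} a(\ell)$. The hypothesis then becomes a linear constraint on these weights: for every hyperplane $H$ through $P$, the lines through $P$ contained in $H$ account for all of $S \cap H$ apart from $P$, whence
\[
\sum_{\ell \subseteq H} a(\ell) = {}^{\#}(S \cap H) - 1 \leq \delta - 1 .
\]
(Note $P \in S \cap H$ forces $\delta \geq 1$, and each such $H$ contains $\theta_q(n-2)$ lines through $P$.)

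The main step is a double count of the weighted incidences between the lines through $P$ and the hyperplanes through $P$. Summing the displayed inequality over all $\theta_q(n-1)$ hyperplanes through $P$, and using that each line through $P$ lies in exactly $\theta_q(n-2)$ of them, I obtain
\[
\left( {}^{\#}S - 1 \right)\theta_q(n-2) = \sum_{H \ni P} \sum_{\ell \subseteq H} a(\ell) \leq (\delta-1)\,\theta_q(n-1).
\]
Dividing by $\theta_q(n-2)$ and using $\theta_q(n-1)/\theta_q(n-2) = q + 1/\theta_q(n-2)$ gives ${}^{\#}S - 1 \leq (\delta-1)q + (\delta-1)/\theta_q(n-2)$; since the left-hand side is an integer, the fractional error term may be replaced by its floor, which is exactly the asserted bound.

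I expect the delicate points to be bookkeeping rather than ideas: one must carefully match the lines and hyperplanes through $P$ with the points and hyperplanes of the $\mathbb{P}^{n-1}$ of directions at $P$, and correctly verify the two incidence counts $\theta_q(n-1)$ and $\theta_q(n-2)$. The essential gain over a naive incidence count over all of $\mathbb{P}^n(\mathbb{F}_q)$ — which would only yield $\delta q + \delta/\theta_q(n-1)$ — is the passage to the star of a point $P \in S$: this simultaneously lowers the dimension by one and replaces $\delta$ by $\delta - 1$, and the final floor then comes for free from integrality.
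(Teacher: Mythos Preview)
Your argument is correct: fixing $P\in S$, counting weighted incidences between the $\theta_q(n-1)$ lines through $P$ and the $\theta_q(n-1)$ hyperplanes through $P$, and then invoking integrality yields exactly the stated bound. The paper itself does not give a proof here but simply cites \cite[Proposition~2.2]{hom2012}, so there is nothing to compare against beyond noting that your self-contained star-of-a-point double count is the standard way to establish this inequality.
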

\begin{proof}
See \cite[Proposition 2.2]{hom2012}.
\end{proof}

\begin{proof}[Proof of Lemma~\ref{inductionlemma}]
Let
\[
\delta =  (d-1)(q^{n-2}+1) + (d-2)(\theta_q(n-4)-1).
\]
Then, from the assumption on $N_q(X\cap H)$
and Lemma~\ref{lemmainfinitegeometry},
\[
N_q(X) \leq  (\delta -1)q +1
 + \lfloor \frac{\delta -1}{\theta_q(n-2)} \rfloor.
\]
Since
\begin{align*}
 \delta -1 &= (d-2)( q^{n-2} + \theta_q(n-4)) + q^{n-2} \\
           &\leq (d-2)\theta_q(n-2) + q^{n-2},
\end{align*}
we have
\[
 \frac{\delta -1}{\theta_q(n-2)} \leq (d-2)+ \frac{q^{n-2}}{\theta_q(n-2)}.
\]
Since $q^{n-2} < \theta_q(n-2)$,
$ \lfloor \frac{\delta -1}{\theta_q(n-2)} \rfloor =d-2.$
Hence
\begin{align*}
(\delta -1)q& +1
 + \lfloor \frac{\delta -1}{\theta_q(n-2)} \rfloor \\
        & = \left( (d-1)( q^{n-2}+1) + (d-2)(\theta_q(n-4)-1) -1\right)q
             +1 + d-2 \\
        & = \left( (d-1)q^{n-2} + (d-2)\theta_q(n-4) \right)q + d-1\\
        & = (d-1)(q^{n-1} +1) + (d-2)(\theta_q(n-3)-1).
\end{align*}
This completes the proof.
\end{proof}

In the next section,
we will show the following theorem.
\begin{theorem}\label{dq4}
For an irreducible surface $S$ over $\mathbb{F}_4$ of degree $4$
in $\mathbb{P}^3$,
the bound {\rm (\ref{maininequality})} is valid.
\end{theorem}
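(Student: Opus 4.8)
The plan is to prove the inequality $N_4(S)\le 51$ (the value of the right-hand side of (\ref{maininequality}) for $n=3$, $d=q=4$) by the same dual counting that underlies Lemma~\ref{lemmainfinitegeometry}, but carried out \emph{locally} at a carefully chosen point so as to neutralize the exceptional curve $K$. The only obstruction to applying Lemma~\ref{inductionlemma} directly is that a plane section $S\cap H$ may be projectively isomorphic to $K$ and hence carry $14>(d-1)q+1=13$ points; every plane section has no $\mathbb{F}_4$-line as a component (such a line would lie on $S$), so by the Sziklai bound (\ref{sziklai}) all other sections have at most $13$ points. Thus the whole difficulty is to show that sections $\cong K$ cannot be numerous enough, nor spread evenly enough over $S(\mathbb{F}_4)$, to push the total past $51$.

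First I would dispose of the singular case. If $S$ has an $\mathbb{F}_4$-rational singular point $P_0$, then each of the $\theta_4(2)=21$ lines through $P_0$ meets $S$ with multiplicity $\ge 2$ at $P_0$, hence in at most $4-2=2$ further points (no line lies on $S$), and summing over these lines gives $N_4(S)\le 1+2\cdot 21=43<51$. So I may assume $S$ is smooth at every $\mathbb{F}_4$-point. I would also record two facts about $K$: it is a \emph{smooth} plane quartic (a short computation shows $\partial F/\partial X=YZ(Y+Z)$, and cyclically, so the common zeros of the three partials all lie off $K$), and a plane quartic with no $\mathbb{F}_4$-line, other than $K$, has at most $13$ points. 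In particular a section $\cong K$ is transverse to $S$ at each of its points, so it is never cut out by a tangent plane.

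Next I would run the weight argument at one smooth point $P\in S(\mathbb{F}_4)$: to each line $\ell\ni P$ assign $w(\ell)=\#(S\cap\ell)-1$, identify the planes through $P$ with the lines of the quotient $\mathbb{P}^2$ of lines through $P$, and double count to get $5\bigl(N_4(S)-1\bigr)=\sum_{H\ni P}\bigl(\#(S\cap H)-1\bigr)$. The tangent plane $T_PS$ cuts out a quartic with a double point at $P$, hence with at most $11$ points; each of the other $20$ planes cuts out a quartic smooth at $P$, contributing at most $12$, with $13$ only when the section is $\cong K$. Writing $m_P$ for the number of planes through $P$ whose section is isomorphic to $K$ (all among these $20$), this yields $5(N_4(S)-1)\le 10+12\cdot 20+m_P=250+m_P$, so $N_4(S)\le 51+\lfloor m_P/5\rfloor$. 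Hence it suffices to exhibit a single $\mathbb{F}_4$-point of $S$ lying on at most $4$ sections isomorphic to $K$.

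The hard part will be to guarantee such a point, that is, to rule out that every point of $S(\mathbb{F}_4)$ lies on $\ge 5$ sections $\cong K$. The plan is to exploit the rigidity of $K$ together with the no-line hypothesis: two distinct sections $S\cap H$ and $S\cap H'$ isomorphic to $K$ meet only along the line $H\cap H'$, which is not contained in $S$, so they share at most $4$ points of $S$. Feeding this bounded-intersection constraint, the explicit $14$-point configuration of $K$, and an averaging of the incidence count over the points of one fixed $K$-section should force the number of $K$-sections to be larger than a quartic surface can support, giving the contradiction. I expect controlling the number and mutual position of the $K$-sections—equivalently, bounding how often a fixed smooth quartic can recur as a plane section of $S$—to be the genuine obstacle and the step where the special arithmetic of $K$ over $\mathbb{F}_4$ must really be used.
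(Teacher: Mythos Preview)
Your reduction to the smooth case and the local identity
\[
5\bigl(N_4(S)-1\bigr)=\sum_{H\ni P}\bigl(\#(S\cap H)(\mathbb{F}_4)-1\bigr)
\]
are correct, as is the tangent-section bound $\le 11$ and the conclusion $N_4(S)\le 51+\lfloor m_P/5\rfloor$. The gap is exactly where you locate it: you never prove that some $P$ has $m_P\le 4$, and the closing paragraph is a plan, not an argument. The constraints you propose to exploit (two $K$-sections share at most four points; each carries exactly $14$) give, after the obvious double counts, only lower bounds on the number $k$ of $K$-sections (e.g.\ $14k\ge 5N$), never the upper bound you need. I do not see how to close this without importing substantially more information about the plane sections than the dichotomy ``$K$ or not $K$''.

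The paper avoids this difficulty by replacing your local count with the global incidence
\[
\theta_4(2)\,N_4(S)=\sum_{H\in\Check{\mathbb{P}}^3(\mathbb{F}_4)}\#(S\cap H)(\mathbb{F}_4),
\]
and, crucially, by refining the section bound according to $t(H):=\#\{P\in S(\mathbb{F}_4):T_PS=H\}$ rather than according to whether $S\cap H\cong K$. A case analysis of the possible quartic plane sections yields $\#(S\cap H)(\mathbb{F}_4)\le 14,\,11,\,10,\,8,\,6,\,5$ for $t(H)=0,1,2,3,4,5$ respectively, with $t(H)=5$ occurring only for a double irreducible conic. These bounds are, up to a unit, of the shape $14-2t(H)$; since $\sum_H t(H)=N_4(S)$ (each smooth $\mathbb{F}_4$-point has a unique tangent plane), summing gives $21N\le 14\theta_4(3)-2N+n_5-n_1$. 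After checking separately that one may take $n_5\le 1$, this yields $23N\le 1191$ and hence $N\le 51$. The point is that classifying sections by their tangency count, not by whether they hit the exceptional curve, is what makes the global count close; your local count discards the tangencies at points other than $P$, and that lost information is precisely what you are trying to recover in your ``hard part''.
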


Here we complete the proof of Theorem~\ref{maintheorem} under Theorem~\ref{dq4}.

\begin{proof}[Proof of Theorem~\ref{maintheorem}]
When $n=2$, the statement of the theorem is
the same as \cite[Theorem 3.1]{hom-kim2010b}.
Let us consider the case $n=3$,
that is, $X$ is a surface of degree $d$ in $\mathbb{P}^3$
which is irreducible over $\mathbb{F}_q$.
Then we can apply Lemma~\ref{inductionlemma}
for $X$ except the case $d=q=4$.
This exceptional case is just the case where we handle in Theorem~\ref{dq4}.
Therefore the induction on $n \geq 3$ works well by Lemma~\ref{inductionlemma}.
\end{proof}

\section{Surface over $\mathbb{F}_4$ of degree $4$ in $\mathbb{P}^3$}
The aim of this section is to prove Theorem~\ref{dq4}.
An explicit statement is as follows.
\begin{theorem}\label{explicitform}
Let $S$ be an irreducible surface over $\mathbb{F}_4$ of degree $4$
in $\mathbb{P}^3$, then $N_4(S) \leq 51$.
\end{theorem}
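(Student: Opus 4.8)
The plan is to analyze $S$ through its plane sections and to exploit a very rigid description of the exceptional curve $K$. First I would record what $K$ looks like over $\mathbb{F}_4$: expanding the defining equation in characteristic $2$ and evaluating it shows that $K$ takes a nonzero value at each of the seven points of $\mathbb{P}^2(\mathbb{F}_2)$ and vanishes at all remaining points, so $K(\mathbb{F}_4) = \mathbb{P}^2(\mathbb{F}_4) \setminus \mathbb{P}^2(\mathbb{F}_2)$, the complement of a Fano subplane, and in particular $N_4(K) = 21 - 7 = 14$. Since $S$ is irreducible of degree $4$ and contains no $\mathbb{F}_4$-line, for every $H \in \Check{\mathbb{P}}^3(\mathbb{F}_4)$ the section $S \cap H$ is a plane quartic with no $\mathbb{F}_4$-line component, so by the case $n=2$ of Theorem~\ref{maintheorem} we have $N_4(S \cap H) \le 13$ unless $S \cap H$ is projectively equivalent to $K$, in which case $N_4(S \cap H) = 14$. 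I call such a plane \emph{exceptional}; by the above, an exceptional plane $H$ meets the complement $S^{c} := \mathbb{P}^3(\mathbb{F}_4) \setminus S(\mathbb{F}_4)$ in exactly a $7$-point Fano subplane $\pi_H \subset H$.

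Second, I would run the double count underlying Lemma~\ref{lemmainfinitegeometry} while tracking exceptional planes. Fix $P \in S(\mathbb{F}_4)$ (if there is none the bound is trivial). The $\theta_4(2) = 21$ planes through $P$ cover $\mathbb{P}^3(\mathbb{F}_4) \setminus \{P\}$, each remaining point lying on exactly $\theta_4(1) = 5$ of them, so $\sum_{H \ni P}(N_4(S \cap H) - 1) = 5(N_4(S) - 1)$. If at most two of these $21$ planes are exceptional, the left-hand side is at most $2 \cdot 13 + 19 \cdot 12 = 254$, whence $N_4(S) \le 1 + 254/5 < 52$, i.e. $N_4(S) \le 51$. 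Thus it suffices to produce one point of $S(\mathbb{F}_4)$ lying on at most two exceptional planes; equivalently, it remains only to rule out the configuration in which every point of $S(\mathbb{F}_4)$ lies on at least three exceptional planes. (A variant using the pencil of the $q+1$ planes through a $4$-secant line gives the same reduction and is convenient, since inside an exceptional plane the $4$-secant lines of $K$ are exactly the $\mathbb{F}_4$-lines not defined over $\mathbb{F}_2$.)

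Third — and this is where the real work lies — I would show that the bad configuration cannot occur. If every $\mathbb{F}_4$-point of $S$ lay on at least three exceptional planes, then counting incidences (point of $S$, exceptional plane through it) gives $14 t \ge 3 N_4(S)$, where $t$ is the number of exceptional planes; combined with $N_4(S) \ge 52$ this forces $t \ge 12$. Now I would use the rigidity from the first step: the $t$ Fano subplanes $\pi_H$ all sit inside the small set $S^{c}$ (of size $85 - N_4(S) \le 33$), and two of them, lying in planes meeting in a line $\ell$, can share only the Fano points of $\ell$, of which there are at most three. The task is to contradict the coexistence of twelve or more $7$-point Fano subplanes, pairwise meeting in at most three collinear points, inside a $33$-point set, using that each $\pi_H$ is the full $\mathbb{F}_2$-structure of its plane and that an $\mathbb{F}_4$-line meets such a subplane in $0$, $1$, or $3$ points. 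I expect this incidence analysis to be the main obstacle: the naive union and second-moment estimates are too weak (they only yield $N_4(S) \le 56$), so one must invoke the projective constraints on how Fano subplanes of distinct planes of $\mathbb{P}^3$ overlap, together with the fact that the $S$-points inside each exceptional plane are forced to be precisely the non-$\mathbb{F}_2$ points. Once this configuration is excluded, the double count of the second step gives $N_4(S) \le 51$, proving Theorem~\ref{explicitform} and hence Theorem~\ref{dq4}.
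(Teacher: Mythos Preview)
Your reduction in the second step is correct, but the third step is not a proof: you explicitly say that the naive union and second-moment estimates only yield $N_4(S)\le 56$, and you leave the exclusion of the ``$\ge 12$ Fano subplanes in a $33$-point set'' configuration as an open task. Nothing you wrote afterwards turns that task into an argument; the projective constraints you allude to (how Fano subplanes in distinct planes of $\mathbb{P}^3$ overlap) are not made precise, and it is not clear they suffice. So as it stands the proposal has a genuine gap at exactly the point you flag as the main obstacle.

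The paper sidesteps this obstacle by introducing a finer invariant than ``exceptional vs.\ not'': after first disposing of the case where $S$ has a singular $\mathbb{F}_4$-point (an easy count through that point gives $N_4(S)\le 43$), it sets $t(H)=\#\{P\in S(\mathbb{F}_4):T_PS=H\}$ and proves, via a case analysis of the quartic $S\cap H$, the graded bounds
\[
\#\bigl(S\cap H(\mathbb{F}_4)\bigr)\ \le\ 14,\ 11,\ 10,\ 8,\ 6,\ 5\quad\text{for }t(H)=0,1,2,3,4,5,
\]
with $t(H)=5$ forcing a double conic. The crucial identity your approach lacks is $\sum_H t(H)=N_4(S)$, which holds because every $\mathbb{F}_4$-point (now nonsingular) has a unique tangent plane. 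Writing $n_j=\#\{H:t(H)=j\}$, summing $\#(S\cap H)$ over all $85$ planes, and using $\sum_j j\,n_j=N$ gives
\[
21\,N\ \le\ 14\cdot 85-2N+n_5-n_1.
\]
A short separate argument shows one may take $n_5\le 1$, whence $23N\le 1191$ and $N\le 51$. The point is that stratifying planes by $t(H)$, rather than only by whether the section is projectively equivalent to $K$, converts the global double count into a linear inequality in $N$ alone; your dichotomy exceptional/non-exceptional is too coarse to do this, which is why you are left with a hard combinatorial residue.
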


If any $\mathbb{F}_4$-plane section $S \cap H$  of $S$ 
is not $\mathbb{F}_4$-isomorphic
to the curve $K \subset H= \mathbb{P}^2$,
one can apply Lemma~\ref{inductionlemma} (2),
and get $N_4(S) \leq 51$.

We need a property of the plane curve $K$.
\begin{remark}\label{remarkonK}
If a plane curve over $\mathbb{F}_4$ of degree $4$
is projectively isomorphic to $K$ over $\mathbb{F}_4$,
then any $\mathbb{F}_4$-line of the plane meets
the curve at least one $\mathbb{F}_4$-point.
Indeed,
$K(\mathbb{F}_4) = 
\mathbb{P}^2(\mathbb{F}_4) \setminus \mathbb{P}^2(\mathbb{F}_2)$
(see \cite[Section~3]{hom-kim2009}).
\end{remark}

The next lemma is trivial,
but meaningful for the proof of Theorem~\ref{explicitform}
\begin{lemma}\label{overalgclosed}
Let $Y$ be a surface in $\mathbb{P}^3$ over an algebraically closed field.
Let $P \in Y$ and $H$ be a plane which is not a component of $Y$ such that
$H \ni P$.
\begin{enumerate}[\rm (1)]
 \item Suppose that $P$ is a nonsingular point of $Y$.
Then $P$ is a singular point of $Y \cap H$ if and only if
$H = T_PY$,
where $T_PY$ is the embedded tangent plane to $Y$ at $P$.
 \item If $P$ is a nonsingular point of $Y \cap H$, then
it is also a nonsingular point of $Y$.
\end{enumerate}
\end{lemma}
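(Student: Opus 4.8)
The plan is to reduce both parts to a single local computation of differentials and then invoke the Jacobian criterion for a hypersurface inside a smooth ambient variety. First I would pass to an affine chart containing $P$, writing $Y = V(f)$ for a local equation $f$ and $H = V(\ell)$ for a linear form $\ell$ with $\ell(P)=0$. Because $H$ is not a component of $Y$, the restriction $g := f|_H$ is not identically zero, so $C := Y \cap H = V(g)$ is genuinely a curve (a hypersurface) inside the smooth plane $H \cong \mathbb{P}^2$, and $P \in C$ since $P \in Y \cap H$. The Jacobian criterion then says that $P$ is a singular point of $C$ if and only if the differential $dg_P$ vanishes on the (two-dimensional) tangent space $T_P H$.

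The key identity is that $dg_P$ is nothing but the restriction $df_P|_{T_P H}$ of the covector $df_P$ to the plane of tangent directions of $H$ at $P$. I would verify this directly from $g = f|_H$ by functoriality of the differential (the chain rule applied to the inclusion $H \hookrightarrow \mathbb{P}^3$), and I would phrase it as the restriction of a covector precisely to keep the argument characteristic-free, avoiding any division that would arise from solving $\ell = 0$ for one coordinate.

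For part (1), nonsingularity of $P$ on $Y$ means $df_P \neq 0$, so $T_P Y = \ker df_P$ is a two-dimensional subspace and the embedded tangent plane is the plane through $P$ with this direction space. Now $P$ is singular on $C$ iff $dg_P = df_P|_{T_P H} = 0$ iff $T_P H \subseteq \ker df_P = T_P Y$; since $T_P H$ and $T_P Y$ are both two-dimensional, this inclusion forces equality $T_P H = T_P Y$, which is exactly $H = T_P Y$ (a plane through $P$ is determined by its direction space). For part (2) I would argue by contraposition: if $P$ is singular on $Y$ then $df_P = 0$, hence $dg_P = df_P|_{T_P H} = 0$, and since $g \not\equiv 0$ this makes $P$ a singular point of $C$; equivalently, a nonsingular point of $C$ must be nonsingular on $Y$.

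There is no deep obstacle here — this is why the statement is labelled \emph{trivial} — so the only things to watch are (i) using the hypothesis that $H$ is not a component of $Y$ exactly where it is needed, namely to guarantee $g \not\equiv 0$ so that $C$ is a curve and the Jacobian criterion applies, and (ii) keeping the differential computation invariant so that it holds in every characteristic. Once the identity $dg_P = df_P|_{T_P H}$ is recorded, everything else collapses to the one-line linear-algebra fact that a two-dimensional subspace contained in another two-dimensional subspace must coincide with it.
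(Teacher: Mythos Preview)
Your argument is correct: the Jacobian identity $dg_P = df_P|_{T_P H}$ together with the dimension count is exactly the right mechanism, and you use the hypothesis that $H$ is not a component precisely where it is needed. The paper itself gives no proof of this lemma---it simply declares it ``trivial''---so there is nothing to compare against; your write-up supplies the standard justification the paper omits.
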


Now we return to our surface $S$.
\begin{lemma}
If there is a singular $\mathbb{F}_4$-point on $S$,
then $N_4(S) < 51$.
\end{lemma}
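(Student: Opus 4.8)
The plan is to count $N_4(S)$ by organizing the $\mathbb{F}_4$-points of $S$ according to the $\mathbb{F}_4$-lines through the singular point; this is just projection from $P$. First, let $P$ be a singular $\mathbb{F}_4$-point of $S$. Since $P$ is a singular point of a surface of degree $4$, its multiplicity $m = \mathrm{mult}_P S$ satisfies $m \geq 2$; multiplicity is a geometric notion, so this persists over $\overline{\mathbb{F}_4}$, and of course $P \in S(\mathbb{F}_4)$.

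Next I would use the pencil of lines through $P$. There are exactly $\theta_4(2) = 21$ $\mathbb{F}_4$-lines through $P$, and they partition $\mathbb{P}^3(\mathbb{F}_4) \setminus \{P\}$: each of the remaining $\theta_4(3)-1 = 84$ points lies on a unique such line, consistent with $84 = 21 \cdot 4$. Because $S$ contains no $\mathbb{F}_4$-line, none of these $21$ lines is a component of $S$, so for each such line $L$ the scheme $L \cap S$ is a divisor of degree $4$ on $L \cong \mathbb{P}^1$.

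Then I would bound the contribution of each line. Writing the defining form of $S$ in local coordinates centred at $P$ and restricting to $L$, the lowest-order term has degree $m$, so the local intersection multiplicity satisfies $i(L, S; P) \geq m \geq 2$. Hence the part of the degree-$4$ divisor $L \cap S$ supported away from $P$ has degree at most $4 - 2 = 2$, so $L$ carries at most $2$ points of $S(\mathbb{F}_4)$ other than $P$. Summing over the $21$ lines, together with the single point $P$, yields
\[
N_4(S) \leq 1 + 21 \cdot 2 = 43 < 51,
\]
which is the desired inequality.

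The argument is essentially elementary, and I do not expect a genuine obstacle: the only point needing care is the inequality $i(L, S; P) \geq \mathrm{mult}_P S$, which I would justify by the local-coordinate expansion just described rather than by invoking a black-box intersection-theory statement. No case analysis on the type of singularity is required, and the bound $43$ sits comfortably below $51$, so there is ample slack.
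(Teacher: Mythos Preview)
Your argument is correct and is essentially identical to the paper's proof: project from the singular $\mathbb{F}_4$-point $P$, use that $i(S.l;P)\geq 2$ for each of the $\theta_4(2)=21$ $\mathbb{F}_4$-lines through $P$ to bound the remaining points on each line by $2$, and conclude $N_4(S)\leq 2\cdot 21+1=43<51$. Your write-up is slightly more explicit (noting that no such line lies on $S$ and spelling out the multiplicity inequality), but the idea and the final bound match the paper exactly.
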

\begin{proof}
Let $P \in S $ be a singular $\mathbb{F}_4$-point,
and $\mathcal{L}_P$ the set of $\mathbb{F}_4$-lines
passing through $P$.
Then
\[
N_4(S) = \sum_{l \in \mathcal{L}_P} 
  {}^{\#} \left( S(\mathbb{F}_4) \cap l \setminus \{P\}\right) +1.
\]
Since the intersection multiplicity $i(S.l;P)$ of $S$ and $l$ at $P$ is
at least $2$, we have
${}^{\#} \left( S(\mathbb{F}_4) \cap l\setminus \{P\} \right) \leq 2$.
Hence
$N_4(S) \leq 2\theta_4(2) +1 =43$.
\end{proof}

By this lemma, we can assume additionally that each point of $S(\mathbb{F}_4)$
is a nonsingular point of $S$.

\begin{lemma}\label{mainlemma}
Let $S$ be an irreducible surface over $\mathbb{F}_4$ of degree $4$
in $\mathbb{P}^3$.
Suppose that each $\mathbb{F}_4$-point of $S$ is nonsingular.
Let $H$ be an $\mathbb{F}_4$-plane of $\mathbb{P}^3$,
and $t(H) ={}^{\#}\{ P \in S(\mathbb{F}_4) \mid H= T_PS \}.$
Then
\begin{enumerate}[\rm (1)]
 \item $0 \leq t(H) \leq 5$;
 \item
    \begin{enumerate}[\rm (i)]
      \item if $t(H)=0$, then ${}^{\#}(S \cap H(\mathbb{F}_4)) \leq 14;$
      \item if $t(H)=1$, then ${}^{\#}(S \cap H(\mathbb{F}_4)) \leq 11;$
      \item if $t(H)=2$, then ${}^{\#}(S \cap H(\mathbb{F}_4)) \leq 10;$
      \item if $t(H)=3$, then ${}^{\#}(S \cap H(\mathbb{F}_4)) \leq 8;$
      \item if $t(H)=4$, then ${}^{\#}(S \cap H(\mathbb{F}_4)) \leq 6;$
      \item if $t(H)=5$, then ${}^{\#}(S \cap H(\mathbb{F}_4)) =5.$
    \end{enumerate}
  \item $t(H)=5$ if and only if $S\cap H$ is a double conic
  and irreducible as a topological space.
\end{enumerate}
\end{lemma}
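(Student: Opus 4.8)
The plan is to analyze a plane section $S\cap H$ of the quartic surface $S$ by studying, for each $\mathbb{F}_4$-point $P$ on $S\cap H$, how the plane $H$ meets $S$ at $P$. The key tool is Lemma~\ref{overalgclosed}: since every $\mathbb{F}_4$-point of $S$ is nonsingular on $S$, a point $P\in (S\cap H)(\mathbb{F}_4)$ is a singular point of the plane quartic curve $S\cap H$ \emph{if and only if} $H=T_PS$, that is, if and only if $P$ is one of the $t(H)$ points counted by the function $t$. So $t(H)$ is exactly the number of $\mathbb{F}_4$-rational singular points of the degree-$4$ plane curve $S\cap H$. The whole lemma is then a statement purely about plane quartics over $\mathbb{F}_4$ and their rational singular points, and I would try to reduce everything to that setting.

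First I would establish part (1). A plane quartic curve $C=S\cap H$ has arithmetic genus $3$, and each singular point contributes at least $1$ to the delta-invariant (drop in genus). If $C$ is \emph{reduced}, the sum of delta-invariants over all singular points is at most the arithmetic genus $3$ for each irreducible component plus the contributions from intersections between components; bounding the total number of rational singular points by $5$ (rather than $3$) requires care when $C$ is reducible into lines and conics, where nodes at intersection points accumulate. The most delicate case is when $C$ is \emph{nonreduced}: since $S$ contains no $\mathbb{F}_4$-line, $C$ has no $\mathbb{F}_4$-line component, so a nonreduced quartic must be a double conic $2Q$ with $Q$ an irreducible conic, and then every point of $Q$ is a singular point of $C$. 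Here I expect the bound $t(H)\le 5$ to come from the fact that an irreducible conic over $\mathbb{F}_4$ has exactly $\theta_4(1)=5$ rational points, which simultaneously forces the equality case and motivates part (3).

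For part (2) I would, for each value of $t=t(H)$, bound ${}^{\#}(S\cap H)(\mathbb{F}_4)=N_4(C)$ where $C$ is a plane quartic over $\mathbb{F}_4$ having exactly $t$ rational singular points and no $\mathbb{F}_4$-line component. The idea is to combine the Sziklai bound (\ref{sziklai}) for the reduced support with the observation that each rational singular point, having multiplicity $\ge 2$, is met by few lines through it that carry extra points, so singular points \emph{reduce} the total count. Concretely, projecting from a singular rational point $P$ (so every line through $P$ meets $C$ with multiplicity $\ge 2$ at $P$) limits the off-$P$ points on each of the $\theta_4(1)=5$ lines through $P$; iterating this combinatorial counting as $t$ grows is what produces the decreasing sequence $14,11,10,8,6,5$. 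I would handle the reducible configurations (conic-plus-conic, conic-plus-two-lines-meeting, four-lines patterns—though lines are excluded as $\mathbb{F}_4$-components, their non-rational Galois conjugates may still appear) case by case, using Remark~\ref{remarkonK} where an exceptional $K$-type section could otherwise sneak in.

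The main obstacle will be the low-$t$ cases, especially $t(H)=0$ and $t(H)=1$, where the curve $C$ is close to smooth and the naive Sziklai bound $N_4(C)\le 3q+1=13$ is already better than some of the claimed numbers but must be sharpened to $14$ and $11$ respectively while accounting for the genuinely singular (but not $\mathbb{F}_4$-rational-singular) and reducible configurations; in particular ruling out the $K$-configuration and controlling curves with singularities defined only over extensions of $\mathbb{F}_4$. Part (3), the equality characterization, should then follow by tracing back the equality conditions: $t(H)=5$ forces $C$ to carry $5$ rational singular points with no rational smooth points to spare, which by the delta-invariant bookkeeping from part (1) can only happen when $C=2Q$ is a double irreducible conic, and irreducibility of $C$ as a topological space is exactly irreducibility of $Q$ over $\mathbb{F}_4$ (so that $Q$ has its full complement of $5$ rational points and no splitting into conjugate lines).
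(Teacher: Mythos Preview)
Your plan is essentially the paper's own approach: reduce $t(H)$ to the count of $\mathbb{F}_4$-rational singular points of the plane quartic $C=S\cap H$ via Lemma~\ref{overalgclosed}, then analyze $C$ case by case. The paper organizes the cases by the irreducibility structure of $C$ (absolutely irreducible; irreducible over $\mathbb{F}_4$ but splitting as two conjugate conics or four conjugate lines; reducible over $\mathbb{F}_4$ into two conics, each possibly a conjugate line pair) rather than by the value of $t$, and this makes both the bound $t(H)\le 5$ and the reducible counts fall out directly.

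One concrete place where your sketch needs adjustment: projection from a rational singular point does give $11$ for $t=1$ and $10$ for $t=2$ in the absolutely irreducible case, but for $t=3$ the same combinatorics only yields $9$, not $8$. The paper closes this gap by noting that three singular points on an absolutely irreducible quartic force geometric genus $0$, so the normalization is $\mathbb{P}^1$ over $\mathbb{F}_4$ and the smooth $\mathbb{F}_4$-points of $C$ inject into $\mathbb{P}^1(\mathbb{F}_4)$, giving at most $5+3=8$. Also, the case $t(H)=0$ is not an obstacle: it is immediate from the Sziklai theorem with its unique exception $K$ (which has $14$ points), so $14$ is a \emph{weakening} of $(d-1)q+1=13$, not a sharpening. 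Finally, $t(H)\in\{4,5\}$ never occurs when $C$ is absolutely irreducible (arithmetic genus $3$ caps the singular points at $3$); these values arise only from two-conic configurations, where direct intersection counting gives $6$ and $5$ and yields the double-conic characterization in part~(3).
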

\begin{proof}
Since $\deg S \cap H =4$, $N_4( S \cap H ) \leq 14$ and
equality holds if and only if $ S \cap H$ is projectively isomorphic to $K$
over $\mathbb{F}_4$
by \cite[Theorem 3.1]{hom-kim2010b}.
From now on, we assume that $t(H)>0$.
By Lemma~\ref{overalgclosed} and the assumption that each point of
$S(\mathbb{F}_4)$ is nonsingular,
$t(H)$ coincides with
the number of singular $\mathbb{F}_4$-points of $S\cap H$.

(Case I) Suppose that $S \cap H$ is absolutely irreducible.
Since $S\cap H$ is of degree $4$ in $H = \mathbb{P}^2$,
the arithmetic genus of $S \cap H$ is $3$.
Hence the number of singular points is at most $3$.
Hence $t(H) \leq 3$.
\begin{itemize}
\item When $t(H) =1$, let $P_0 \in S \cap H(\mathbb{F}_4)$
be the singular point of the curve $S \cap H$.
Then, since $i(l. S\cap H; P_0) \geq 2$ for any $\mathbb{F}_4$-line $l$ on $H$,
 \begin{align*}
  {}^{\#} \left(S \cap H(\mathbb{F}_4) \right)
       &= \sum_{l \in \mathcal{L}_{P_0} \cap \Check{H}}
                 {}^{\#}\left(
                       l \cap  (S \cap H(\mathbb{F}_4)) \setminus \{ P_0\} 
                        \right) + 1 \\
       & \leq 5 \cdot 2 + 1 =11,
 \end{align*}
where $\mathcal{L}_{P_0} \cap \Check{H}$ is the set of $\mathbb{F}_4$-lines
of $H = \mathbb{P}^2$ passing through $P_0$, which consists of $5$ lines.
\item When $t(H)= 2$,
the $\mathbb{F}_4$-line passing through two singular points
does not meet other points of $(S \cap H(\mathbb{F}_4))$.
So we have
${}^{\#} \left(S \cap H(\mathbb{F}_4) \right) \leq 10$
by similar arguments to the above.
\item When $t(H)=3$, the normalization of $S \cap H$ at those three points
is $\mathbb{P}^1$ defined over $\mathbb{F}_4$.
Hence
${}^{\#} \left(S \cap H(\mathbb{F}_4) \right) \leq N_4(\mathbb{P}^1) +3 = 8.$
\end{itemize}

(Case II)
Suppose that $S \cap H$ is not absolutely irreducible,
but irreducible over $\mathbb{F}_4$, which is divided into two sub-cases.
\begin{itemize}
 \item[(II-1)] Let $S \cap H$ be a union of two absolutely irreducible conics
 that are conjugate over $\mathbb{F}_4$ each other.
 Then  $S \cap H(\mathbb{F}_4)$ is contained in the intersection
 of those two conics.
 Hence $t(H) \leq {}^{\#}(S \cap H(\mathbb{F}_4)) \leq 4.$
 \item[(II-2)] Let
 $S \cap H = l \cup l^{(4)} \cup l^{(4^2)} \cup l^{(4^3)}$,
 where $l$ is a line over $\mathbb{F}_{4^4}$  and not defined over
 a smaller field.
 Then
$S \cap H (\mathbb{F}_4) \subset l \cap l^{(4)} \cap l^{(4^2)} \cap l^{(4^3)}$.
Hence $t(H) \leq {}^{\#}(S \cap H(\mathbb{F}_4)) \leq 1.$
\end{itemize}

(Case III)
Suppose that $S \cap H$ is not irreducible over $\mathbb{F}_4$.
Since $S \cap H$ has no $\mathbb{F}_4$-line as a component,
$S \cap H = C_1 \cup C_2$,
where $C_i$ is a plane curve of degree $2$ which is irreducible
over  $\mathbb{F}_4$.
\begin{itemize}
 \item[(III-1)] If $C_1 = l \cup l^{(4)}$ and $C_2 = l' \cup l'^{(4)}$ for lines $l$ and $l'$ over $\mathbb{F}_{4^2}$ that are not defined over $\mathbb{F}_4$,
then $S \cap H(\mathbb{F}_4) \subset (l \cap l^{(4)}) \cup (l' \cap l'^{(4)}).$
Hence $t(H) \leq {}^{\#}(S \cap H(\mathbb{F}_4)) \leq 2$.
 \item[(III-2)] If $C_1 = l \cup l^{(4)}$ and $C_2$ is absolutely irreducible,
 then $S \cap H(\mathbb{F}_4) \subset (l \cap l^{(4)}) \cup C_2(\mathbb{F}_4)$.
 Hence $t(H)=1$, and
 \[
 {}^{\#}(S \cap H(\mathbb{F}_4)) =\left\{
    \begin{array}{ccc}
     6 & \mbox{\rm if } & (l \cap l^{(4)}) \not\in C_2\\
     5 & \mbox{\rm if } & (l \cap l^{(4)}) \in C_2.
    \end{array}
     \right.
 \]
 \item[(III-3)] If both $C_1$ and $C_2$ are absolutely irreducible,
 then we have the following list according the number of
 $C_1 \cap C_2(\mathbb{F}_4)$.
 In this case, the set of singular points in $S\cap H(\mathbb{F}_4)$
 is just $C_1\cap C_2(\mathbb{F}_4)$.
 \[
 \begin{array}{ccc}
 {}^{\#}\left( C_1\cap C_2(\mathbb{F}_4) \right) &
             \  t(H) \   &
                   {}^{\#}\left( S \cap H(\mathbb{F}_4) \right) \\
      \hline
  1 &  1  &  9 \\
  2 &  2  &  8 \\
  3 &  3  &  7 \\
  4 &  4  &  6 \\
 C_1 =C_2& 5 & 5
 \end{array}
 \]
\end{itemize}
From the above observations, we have all assertions.
 \end{proof}

\begin{proof}[Proof of Theorem~\ref{explicitform}]
Let $N= N_4(S)$ and
\[
n_i = {}^{\#}\{
         H \in \Check{\mathbb{P}}^3(\mathbb{F}_4) \mid t(H) =i
             \}.
\]
Note that $n_i=0$ if $i>5$ by Lemma~\ref{mainlemma} (1).
Hence
\[
n_0 = \theta_4(3) - (n_1 + n_2 + n_3 + n_4 +n_5).
\]
Recall that each point of $S(\mathbb{F}_4)$ is nonsingular.
Hence
\[
n_1 + 2 n_2 + 3 n_3 + 4 n_4 + 5 n_5 = N.
\]

First we show that $n_5$ can be assumed at most $1$.
Let $H_1$ and $H_2$ be $\mathbb{F}_4$-planes in $\mathbb{P}^3$
such that $t(H_1)=t(H_2)=5$.
Suppose $H_1 \cap H_2 \cap S(\mathbb{F}_4) \neq \emptyset .$
Choose a point $P \in H_1 \cap H_2 \cap S(\mathbb{F}_4).$
For $i=1$ and $2$,
since $H_i \cap S$ is a double conic, $P$ is a singular point
of $H_i \cap S$.
Hence $H_i =T_P(S)$ by Lemma~\ref{overalgclosed},
and hence $H_1=H_2$.
Suppose the contrary:
$H_1 \cap H_2 \cap S(\mathbb{F}_4) = \emptyset $
and $H_1 \neq H_2.$
Let $l$ be the $\mathbb{F}_4$-line $H_1 \cap H_2$, and
$H'_3, H'_4, H'_5$ the other three $\mathbb{F}_4$-planes
containing the line $l$.
Then, for $j = 3, 4$ and $5$,
$H'_j \cap S$ is not $\mathbb{F}_4$-isomorphic to $K$.
In fact, since $l$ is an $\mathbb{F}_4$-line on the plane $H'_j$
and $l \cap (H'_j \cap S(\mathbb{F}_4)) 
\subset l \cap S(\mathbb{F}_4)= \emptyset$,
$H'_j \cap S$ cannot be projectively isomorphic to $K$
over $\mathbb{F}_4$  by Remark~\ref{remarkonK}.
Hence
$
N_4(H'_j \cap S) \leq 13.
$
Therefore
\begin{align*}
 N_4(S) &= N_4(H_1 \cap S) + N_4(H_2 \cap S) 
                   + \sum_{j=3}^5 N_4(H'_j \cap S) \\
        & \leq 5 + 5 +3 \times 13 = 49,
\end{align*}
which means that the target inequality already holds in this case.

Consider the correspondence
\[
\mathcal{P} = \{
        (P, H) \in S(\mathbb{F}_4) \times \Check{\mathbb{P}}^3(\mathbb{F}_4)
           \mid P \in H
              \}
\]
with two projections
$\pi_1: \mathcal{P} \to S(\mathbb{F}_4)$
and
$\pi_2: \mathcal{P} \to \Check{\mathbb{P}}^3(\mathbb{F}_4)$.
By using $\pi_1$,
we have ${}^{\#}\mathcal{P}=N \theta_4(2).$
On the other hand,
by using $\pi_2$,
\begin{align*}
  {}^{\#}\mathcal{P}  = &\sum_{j=0}^{5} \
              \sum_{
               \begin{subarray}{c}
                H \  \text{with} \\
                t(H)=j
               \end{subarray}
                 }
                   {}^{\#}(H \cap S (\mathbb{F}_4)) \\
            \leq &14(\theta_4(3) - (n_1+ \dots +n_5))
              +11n_1 + 10n_2 +8n_3 + 6n_4 +5n_5 \\
              &\text{(by Lemma~\ref{mainlemma})} \\
           = & 14 \theta_4(3)
             -2(\sum_{j=1}^{5} j n_j) +n_5 -n_1 \\
           = & 14 \theta_4(3) -2N + n_5 -n_1\\
           \leq & 14 \theta_4(3) -2N + 1 \ \text{(because $n_5 \leq 1$)}.
 \end{align*}
 Therefore
 \[
 N(\theta_4(2)+2) \leq 14 \theta_4(3) +1 =1191,
 \]
 and then
 $N \leq \lfloor 51 +\frac{18}{23} \rfloor =51$.
 This completes the proof.
 \end{proof}


\end{document}